\newcommand{\imp}{\!\rightarrow\!}
\newcommand{\mpn}{\medskip\par\noindent}
\newcommand{\pa}{{\sf PA}}
\newcommand{\proves}{\vdash}
\newcommand{\gn}[1]{\ulcorner {#1} \urcorner }
\newcommand{\lc}[1]{#1\!\!:\!\!}
\newtheorem{Prop}{\bf Proposition}
\newenvironment{proposition}{\begin{Prop}\em }{\end{Prop}}
\newtheorem{Theor}{\bf Theorem}
\newtheorem{Lemma}{\bf Lemma}
\newtheorem{Coro}{\bf Corollary}
\newtheorem{Fact}{\bf Fact.}
\newtheorem{Remark}{\bf Remark}
\newtheorem{Claim}[enumi]{Claim}
\newtheorem{defin}{\bf Definition}
\newtheorem{exam}{\bf Example}
\newtheorem{notat}{\bf Notation.}
\newenvironment{proof}{{\bf Proof.}}{\hfill $\slot$}
\newcommand{\slot}{\hfill \mbox{$\Box$}\vspace{\parskip}\\}
\newtheorem{Comment}{\bf Comment}
\begin{document}

\title{Non-Compact Proofs}

\author{Sergei Artemov\\ \\
 {\small The Graduate Center, the City University of New York}\\
{\small  365 Fifth Avenue, New York City, NY 10016}\\
{\small {\tt sartemov@gc.cuny.edu}} }
\date{\today}
\maketitle

\begin{abstract}
Non-compact proofs are a class of reasoning that is used in mathematics but overlooked in the analysis of (un)provability of consistency. We focus on proofs of arithmetical statements (*) {\it ``for any natural number $n, F(n)$."}  A proof of (*) is called {\it compact} if all proofs of $F(n)$'s for $n=0,1,2,\ldots$ in it fit into some finitely axiomatized fragment of Peano Arithmetic \pa. An example of {\it non-compact} reasoning is given by the standard proof of 1952 Mostowski's reflexivity theorem: \pa\ proves the consistency of its finite fragments. It turns out that G\"odel's second incompleteness theorem prohibits compact proofs and does not rule out non-compact proofs of \pa-consistency formalizable in \pa. This explains how the proof of \pa-consistency in \pa\ from \cite{Art20,Art25} works. It essentially formalizes in \pa\ an appropriate explicit version of Mostowski's proof, which is not in the scope of G\"odel's theorem.
\end{abstract}

\section{Introduction}\label{intro}
We consider contentual proofs of arithmetical statements
\begin{equation}\label{examples}
 \mbox{\it ``for any natural number $n, F(n)$"} 
\end{equation}
that use only assumptions corresponding to the axioms of \pa. 
\begin{quote} {\it  Let {\em (\ref{examples})} be the claim that there are infinitely many twin primes. Imagine somebody offers a method (algorithm) $\cal M$ that, given $n$, produces an example of twin primes greater than $n$ together with a proof that ${\cal M}(n)$ works for all $n$. This would be an acceptable solution.}
\end{quote}
This solution is finite, since both the algorithm ${\cal M}$ and its verifier are finite. Assuming that it does not use principles outside the ones corresponding to \pa, it is naturally formalizable in \pa. According to the description, its step-by-step formalization is a derivation in \pa\  of 
\begin{equation}\label{form}
 \forall x \mbox{\it ``${\cal M}(x)$ is a proof of $F(x)$."} 
 \end{equation}
Although (\ref{form}) yields \pa\ proofs of $F(n)$ for any natural number $n$, it does not necessarily justify a \pa-derivation of the formal version of (\ref{examples})
\begin{equation}\label{fprime}
\forall x F(x).
\end{equation}
Indeed, a hypothetical \pa-proof of (\ref{fprime}) is a derivation in a finite fragment of \pa\ in which we can then derive all $F(n)$ for $n=0,1,2,\ldots$, whereas the proofs ${\cal M}(n)$ might require access to a potentially unbounded number of \pa-axioms. 

Loosely, a proof of (\ref{examples}) is called {\it compact} if the whole collection of reasonings ${\cal M}(n)$ leading to $F(n)$ for $n=0,1,2...$  fit into some finitely axiomatized fragment of \pa. Otherwise, the proof is called {\it non-compact}. 

It is obvious that any \pa-proof of a formula $\forall x F(x)$ naturally yields a compact proof of (\ref{examples}). 
A specific example of non-compact reasoning is given by the standard proof of Mostowski's reflexivity theorem, MRT, which states that \pa\ proves the consistency of each of its finitely axiomatized fragments.  

In Section~\ref{s3} we give the rigorous definition of compact and non-compact proofs within the framework of selector proofs.  

\section{Explicit reflection and selector proofs}

Selector proofs were introduced in [4] (as proofs of schemas) and in [5], cf. also \cite{SSK23}; they are a sound (and overlooked) addition to the existing rigorous proof systems. Selector proofs have been tacitly used in everyday mathematics but were not explored in mathematical logic. Acknowledging and studying selector proofs sheds new light on some foundational issues in mathematics, such as the (un)provability of consistency.

A {\it serial property ${\cal F}$ in a theory $T$} is an effectively specified collection $\{ F_0, F_1,\ldots,F_n,\ldots \}$ of formulas in the language of $T$. For example, ${\cal F}$ can be given by a primitive recursive enumeration of the codes of formulas $F_i$, for $i=0,1,2,\ldots$ .

A {\it selector proof} of a serial property ${\cal F}$ is a pair of
\renewcommand{\labelenumi}{\text{\roman{enumi})}}
\begin{enumerate}
\item  {\it selector}: a computable operation that given $n$ provides a proof of $F_n$ in $T$. For the foundational purposes, in this work, we assume that such an operation is primitive recursive and explicitly defined.\footnote{A more general view of selector proofs has been studied in \cite{Gad25}.}\item {\it verifier}: a proof in $T$ that the selector does (i). 
\end{enumerate}
The paper \cite{Art25} provides a body of examples that demonstrate that selector proofs have been tacitly adopted by mathematicians.  
\medskip\par

We naturally assume the standard G\"odel numbering as well as the usual formalization technique and notations within the limits of G\"odel and Feferman papers \cite{Fef60,God31}. In particular, we will not distinguish between a finite syntactic object $X$, its G\"odel number $\gn{X}$, and the corresponding \pa-numeral when safe.
\mpn

{\it A natural formalization in \pa\ of a given selector proof of ${\cal F}$ in $T$} is a pair $\langle s,v\rangle$ with: 
\renewcommand{\labelenumi}{\text{\roman{enumi})}}
\begin{enumerate}
\item an arithmetical primitive recursive term $s(x)$ formalizing the selector procedure, 
\item a natural formalization of a given verifier which is a \pa-proof $v$ of 
\[ \forall x [s(x)\!:_{T}\! F^\bullet(x) ].\] 
\end{enumerate}
Here ``$u\!:_{T}\!w$" is a short for the standard arithmetical primitive recursive formula {\it ``$u$ is proof of $w$ in $T$"} and $F^\bullet(x)$ is a natural primitive recursive term such that $F^\bullet(n) = \gn{F_n}$. This definition naturally extends from \pa\ to any other sufficiently strong coding theory. We also denote the arithmetical formula $(0=1)$ as $\bot$.

\subsection{Explicit reflection and the soundness of selector proofs}\label{s2.1}

The provability of {\it explicit reflection}: 
\begin{equation}\label{exref}
\mbox{\it for any natural number $m$ and formula $\varphi$, $\pa\proves\lc{m}\varphi \imp \varphi$}
\end{equation}
was first noticed by G\"odel in 1938 \cite{God38}. Still, his work remained unpublished until 1995, when the provability of explicit reflection had already been independently noted in \cite{AS92a,AS92b} and used in a similar situation in the then-emerging logic of proofs.\footnote{For further developments, cf. \cite{Art01a}.} Explicit reflection helps to establish the soundness of selector proofs with respect to the traditional single-formula arithmetical provability. 
\medskip

The following properties of selector proofs in \pa\ hold (\cite{Art25}):
\begin{itemize}
\item Selector proofs are finite syntactic objects. 
\item Selector proofs absorb conventional proofs as special cases.
\item The selector proof predicate in \pa\
\vskip-10pt
\[ \mbox{``$\langle s, v\rangle$ is a proof of a serial property ${\cal F}$"}
\]  
\vskip-8pt
is decidable. 
\item Selector proofs are as good as conventional proofs for individual formulas. Selector provability of a serial property $\{F_0,F_1,\ldots \}$ yields the conventional provability of each of $F_n$'s. Selector proofs do not add any new theorems to \pa.
\end{itemize}
The last item answers a popular question of whether \pa\ enhanced by selector proofs is a stronger theory that \pa. The answers is no, selector proofs do not increase the conventional deductive power of \pa. Here is the proof of this item.
\begin{proposition}(\cite{Art25}) {\it If a serial property ${\cal F}=\{F_0,F_1,\ldots \}$ is selector provable in \pa, then $\pa\proves F_n$ for each $n=0,1,2,\ldots$.}
\end{proposition}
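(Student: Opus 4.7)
The plan is to reduce the claim to a single instance at each $n$ and then apply explicit reflection~(\ref{exref}). Given a selector proof $\langle s, v\rangle$ of ${\cal F}$ in \pa, the verifier $v$ is a \pa-proof of $\forall x\, [s(x)\!:_{\pa}\! F^\bullet(x)]$. Fix an arbitrary natural number $n$. Universal instantiation of $v$ at the numeral $\overline{n}$ yields $\pa \proves s(\overline{n})\!:_{\pa}\! F^\bullet(\overline{n})$. Since $s$ and $F^\bullet$ are natural \pa-formalizations of primitive recursive functions, letting $k = s(n)$ in the metatheory, the standard G\"odel--Feferman conventions guarantee the provable equations $\pa \proves s(\overline{n}) = \overline{k}$ and $\pa \proves F^\bullet(\overline{n}) = \overline{\gn{F_n}}$. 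Substituting these into the previous formula produces $\pa \proves \overline{k}\!:_{\pa}\! \gn{F_n}$.

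Now I would invoke explicit reflection at the specific numeral $\overline{k}$ and specific formula $F_n$ to obtain $\pa \proves \overline{k}\!:_{\pa}\! \gn{F_n} \imp F_n$. A single modus ponens then delivers $\pa \proves F_n$, which is exactly what the proposition claims.

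The only delicate point in this plan is the middle step --- the provable evaluation of the term $s(\overline{n})$ at a numeral. This is routine under the natural-formalization conventions the paper adopts, yet it is also the conceptual pivot of the argument. The \pa-proof of $F_n$ must be constructed separately for each $n$ by first computing $k = s(n)$, and the \pa-axioms the resulting derivation draws on may grow without bound in $n$. This already foreshadows why a selector proof of ${\cal F}$ need not translate into a compact proof in the sense of Section~\ref{s3}: the construction above yields a family of per-instance derivations rather than a single finite fragment of \pa\ that handles all the $F_n$ uniformly.
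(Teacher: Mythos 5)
Your proposal is correct and follows essentially the same route as the paper's own proof: instantiate the verified universal statement at the numeral for $n$, evaluate the primitive recursive terms $s$ and $F^\bullet$ to standard numerals, and close with explicit reflection~(\ref{exref}) plus modus ponens. You spell out the term-evaluation step that the paper compresses into ``since $s(n)$ is a specific standard natural number,'' but this is elaboration, not a different argument.
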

\begin{proof} Suppose 
\[ \pa\proves\forall x [\lc{s(x)} F^\bullet(x) ].\] 
Then for each natural number (numeral) $n$, 
\[ \pa\proves \lc{s(n)} F^\bullet(n), \] 
i.e., 
\[ \pa\proves \lc{s(n)} F_n . \] 
Since $s(n)$ is a specific standard natural number, by the explicit reflection (\ref{exref}), 
\[ \pa\proves F_n . \] 
\end{proof}

\section{Compact and non-compact selector proofs}\label{s3}

To make the informal intuition of compact and non-compact proofs from Section~\ref{intro} mathematically rigorous, we need a framework in which individual proofs 
${\cal M}(n)$ of $F(n)$ for each $n$ are well-defined formal objects. Conveniently, selector proofs naturally provide such a framework. For our foundational purposes, we define the notions of compactness and non-compactness for formalizable selector proofs in \pa\ and leave their extension to broader classes of proofs to further studies. 
\mpn

A selector proof of a serial property $\cal F$ with selector $s(x)$ is called {\it compact}, if for some finitely axiomatizable fragment $W$ of \pa,
\[ 
\pa\proves\forall x [s(x)\!:_{W}\! F^\bullet(x) ].\]  
A selector proof of $\cal F$ is {\it non-compact}, if it is not compact. 
For the rest of the paper, ``compact proof" and ``non-compact proof"  denote ``compact selector proof" and ``non-compact selector proof."
\mpn
Consider the well-known proof of the Mostowski reflexivity theorem MRT:
\begin{quote}
{\it Let $\pa_{\upharpoonright n}$ be the fragment of \pa\ with the first $n$ axioms. Then for each natural number $n$, $\pa\vdash \mbox{\sf Con}_{{\sf PA}_{\upharpoonright n}}.$}
\end{quote}
From this proof, we can extract a primitive recursive function (selector) $s(x)$ that given $n$ builds a derivation of $\mbox{\sf Con}_{{\sf PA}_{\upharpoonright n}}$ in \pa. Note that 
$\mbox{\sf Con}_{{\sf PA}_{\upharpoonright n}}$ implies ``{\it $D$ is not a proof of $\bot$}" for any specific derivation $D$ in $\pa_{\upharpoonright n}$.

\begin{Prop} $\ \ $ {\it The given proof of the Mostowski reflexivity theorem is a non-compact selector proof.}
\end{Prop}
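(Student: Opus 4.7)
My plan is to argue by contradiction. Suppose the selector proof extracted from the proof of MRT is compact. By the definition in Section~\ref{s3}, there exists a finitely axiomatized fragment $W \subseteq \pa$ such that
\[
\pa \vdash \forall x\, [\, s(x):_{W} F^\bullet(x) \,],
\]
where $F^\bullet(x)$ is the primitive recursive term computing $\gn{\mbox{\sf Con}_{\pa_{\upharpoonright x}}}$. The goal is to leverage this hypothetical uniformity to force some fragment $\pa_{\upharpoonright N}$ to prove its own consistency, contradicting Gödel's second theorem.

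First, since $W$ has only finitely many axioms and each is among the axioms of $\pa$, there exists $N_0$ with $W \subseteq \pa_{\upharpoonright N_0}$. I would then pick $N \geq N_0$ large enough that $\pa_{\upharpoonright N}$ satisfies the standard derivability conditions underlying Gödel's second theorem; for the usual enumerations of $\pa$'s axioms this is automatic, since the necessary arithmetic is listed very early. Instantiating the assumed universal statement at $\overline{N}$ yields $\pa \vdash s(\overline{N}):_{W} \mbox{\sf Con}_{\pa_{\upharpoonright N}}$.

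Second, since $s$ is primitive recursive and $\overline{N}$ is a standard numeral, $\pa$ evaluates $s(\overline{N})$ to a specific numeral $\overline{m}$, so $\pa \vdash \overline{m}:_{W} \mbox{\sf Con}_{\pa_{\upharpoonright N}}$. The formula ``$\overline{m}$ is a $W$-derivation of $\mbox{\sf Con}_{\pa_{\upharpoonright N}}$'' is $\Delta_0$, so its $\pa$-provability forces its truth in the standard model. Hence $m$ genuinely codes a $W$-proof of $\mbox{\sf Con}_{\pa_{\upharpoonright N}}$, i.e., $W \vdash \mbox{\sf Con}_{\pa_{\upharpoonright N}}$. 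Since $W \subseteq \pa_{\upharpoonright N}$, this gives $\pa_{\upharpoonright N} \vdash \mbox{\sf Con}_{\pa_{\upharpoonright N}}$, contradicting Gödel's second incompleteness theorem for $\pa_{\upharpoonright N}$.

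The main obstacle I anticipate is organizational rather than conceptual: choosing $N$ that simultaneously absorbs all axioms of $W$ and makes $\pa_{\upharpoonright N}$ strong enough for Gödel's second theorem. This depends on the adopted enumeration of $\pa$'s axioms, but for any standard listing it is straightforward. Beyond this bookkeeping, the argument uses only the definitions of selector proof and compactness together with the $\Delta_0$-soundness of $\pa$.
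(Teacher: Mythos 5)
Your argument is correct, but it takes a genuinely different route from the paper's. The paper proves non-compactness by \emph{inspecting the given proof}: the standard argument for MRT goes through partial truth definitions $\mbox{\it Tr}_n$, and the induction instances needed to verify the Tarski conditions (Lemma~\ref{l1}) grow with $n$, so the derivations $s(n)$ cannot all live in one finite fragment $W$. You instead give an \emph{abstract impossibility argument}: any compact selector proof of $\{\mbox{\sf Con}_{{\sf PA}_{\upharpoonright x}}\}$ would, by instantiation at a large $N$ with $W\subseteq \pa_{\upharpoonright N}$ and by soundness of \pa\ for the resulting primitive recursive sentence, yield $\pa_{\upharpoonright N}\proves \mbox{\sf Con}_{{\sf PA}_{\upharpoonright N}}$, contradicting G2 for the fragment. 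Your version proves more --- it rules out \emph{every} compact selector proof of this serial property, not just the one extracted from Mostowski's argument --- and it is close in spirit to the paper's later Proposition~\ref{p4} and its corollaries (replacing uniform reflection by soundness plus G2 for finite fragments), whereas the paper's local analysis is more diagnostic: it identifies exactly \emph{where} the given proof consumes unboundedly much induction. Two minor points to tidy up: you should say a word about the first half of the claim, namely that the MRT argument \emph{is} a selector proof (the paper dismisses this as immediate from its format, but it is part of the statement); and you should note that for the $\Delta_0$ step mere consistency of \pa\ suffices, since \pa\ proves every true primitive recursive sentence, so a provable false one would already yield inconsistency.
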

\begin{proof} The fact that the proof of MRT is a selector proof is immediate from its format. An inspection of the proof reveals that it uses a well-known construction called \textit{partial truth definitions} which can be traced back to \cite{KHW55} 
(cf. also \cite{Buss98, HP17, vOo99, Pud98, Smo85}). Namely, for each $n=0,1,2,\ldots$ we build, in a primitive recursive way, an arithmetical formula 
\[ \mbox{\it Tr}_n(x,y), \]
called \textit{truth definition for $\Sigma_{n}$-formulas}, which satisfies natural properties of a truth predicate.
Intuitively, if $\varphi$ is a $\Sigma_{n}$-formula and $y$ is a sequence encoding values of the parameters in $\varphi$, then 
$\mbox{\it Tr}_n(\varphi,y)$ defines the truth value of $\varphi$ on $y$. 
The following lemma is well-known in proof theory of \pa.
\begin{Lemma}\label{l1} {\it For any $\Sigma_{n}$-formula $\varphi$, the following Tarski's condition holds and is provable in \pa:
\begin{equation}\label{Tarski}
\mbox{\it Tr}_n(\varphi,y)\ \leftrightarrow\ \varphi(y).
\end{equation} 
In particular, \pa\ proves $\neg\mbox{\it Tr}_n(\bot)$.}
\end{Lemma}
The analysis of the proof of Lemma~\ref{l1} (cf. \cite{vOo99}) shows that the sets of induction instances involved grow with $n$, so this proof is not compact. 
\end{proof}

For further proof-theoretic analysis of MRT, cf. Section~\ref{s5}.

\section{Selector provability vs. single-formula provability}\label{s4}

Consider a serial property generated by a single arithmetical formula $F(x)$, 
\[ F(0), F(1), \ldots, F(n),\ldots .
\]
We call such property a {\it schema} and denote 
\[ \{F(x)\}. \] 
Obviously, if $\pa\proves \forall x F(x)$, then $\{F(x)\}$ is selector provable. Indeed, from the proof of $\forall x F(x)$ we immediately obtain a selector $s(x)$ and its verification in \pa, i.e. a proof of 
\[ \forall x [\lc{s(x)} F^\bullet(x)].
\]
This selector can be chosen to be compact: the amount of induction needed is limited by a finite sub-theory of \pa\ sufficient for proving $\forall x F(x)$. 

The following easy but fundamental observation made by Elijah Gadsby in \cite{Gad25} shows that the converse holds as well. 
\begin{Prop}\label{p4}  {\it If a schema $\{F(x)\}$ has a compact proof in \pa\ then $\pa\proves\forall x F(x)$.}
\end{Prop}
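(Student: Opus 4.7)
Assume the hypothesis of compactness: there is a primitive recursive selector $s(x)$ and a finitely axiomatizable fragment $W\subseteq\pa$ such that
$$\pa\proves \forall x\, [s(x)\!:_W\! F^\bullet(x)].$$
In particular, for every $x$, \pa\ already knows that the formula coded by $F^\bullet(x)$ is $W$-provable. The goal is to upgrade this to $\pa\proves\forall x\,F(x)$. The plan is to first establish, inside \pa, the uniform reflection instance for $W$ at $F$,
$$\pa\proves\forall x\forall p\,[p\!:_W\! F^\bullet(x) \to F(x)], \qquad (\ast)$$
and then substitute $p:=s(x)$, discharging the antecedent with the hypothesis to obtain $\pa\proves\forall x\,F(x)$.

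To establish $(\ast)$, I would invoke the partial truth definition machinery of Lemma~\ref{l1}. Pick $n$ large enough that every axiom of $W$ and the formula $F(x)$ fall within $\Sigma_n$; such $n$ exists precisely because $W$ has only finitely many axioms and $F$ is a single fixed formula. Working in \pa, one uses $\mbox{\it Tr}_n$ to prove, by induction on the length of a putative $W$-derivation $p$, that every formula appearing in $p$ is satisfied under $\mbox{\it Tr}_n$. The base case reduces to the Tarski biconditionals for the axioms of $W$, provable in \pa\ by Lemma~\ref{l1}. The inductive step uses that $\mbox{\it Tr}_n$ is preserved by the standard first-order inference rules, also provable in \pa. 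Reading off the conclusion at $p\!:_W\! F^\bullet(x)$ and combining with the Tarski biconditional $\mbox{\it Tr}_n(F^\bullet(x))\leftrightarrow F(x)$ delivers $(\ast)$.

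The main subtlety, and the place where the compactness assumption enters decisively, is the existence of a single level $n$ that bounds the complexity of every formula occurring in any $W$-derivation of interest. Because $W$ has only finitely many axioms and $F$ is one fixed formula, a single $\mbox{\it Tr}_n$ suffices for the entire argument, and the induction on derivation length becomes a routine \pa-proof. A non-compact selector, by contrast, could feed proofs drawing on arbitrarily complex induction axioms, so no fixed $\mbox{\it Tr}_n$ would cover all formulas involved -- precisely the obstruction that lets Mostowski's reflexivity theorem be a genuinely non-compact phenomenon, and the reason why the argument above cannot be inverted to threaten G\"odel's second incompleteness theorem.
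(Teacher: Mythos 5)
Your top-level route is the same as the paper's: from $\pa\proves\forall x\,[s(x)\!:_{W}\!F^\bullet(x)]$ pass to $\pa\proves\forall x\,\Box_W F(x)$ and finish with the \pa-provable uniform reflection principle for the finitely axiomatized fragment $W$. The paper simply cites that reflection principle as well known; you attempt to prove it, and that is where there is a genuine gap. You fix $n$ so that the axioms of $W$ and the formula $F$ lie in $\Sigma_n$, and then propose to show, by induction on the length of a $W$-derivation $p$, that ``every formula appearing in $p$ is satisfied under $\mbox{\it Tr}_n$.'' This induction is not even well-formed: a Hilbert-style derivation in $W$ may contain formulas of arbitrarily high logical complexity (modus ponens can pass through a cut formula $A$, with $A$ and $A\to B$ of any complexity, and the logical axiom schemes are instantiated by arbitrary formulas), and $\mbox{\it Tr}_n$ says nothing about formulas outside $\Sigma_n$. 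The finiteness of $W$ bounds the complexity of the \emph{non-logical axioms}, not of the formulas occurring in a derivation from them, so your stated justification (``such $n$ exists precisely because $W$ has only finitely many axioms and $F$ is a single fixed formula'') does not deliver the bound your induction needs.

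The standard repair is to insert formalized cut elimination (or a partial cut elimination / Herbrand-style analysis): since \pa\ proves the totality of superexponentiation, it proves that every $W$-derivation of $F(x)$ can be transformed into a cut-free sequent derivation of $A_W\Rightarrow F(x)$, where $A_W$ is the conjunction of the axioms of $W$; by the subformula property, all formulas in that derivation are substitution instances of subformulas of $A_W$ and $F$, hence of complexity bounded by a single $n$, and only then does the $\mbox{\it Tr}_n$-induction go through. This is exactly the content of the ``well-known \pa-provable uniform reflection principle for finitely axiomatized fragments'' that the paper invokes in one line. Note also that it is this bounded-complexity step obtained via cut elimination, not merely the choice of $n$ for the axioms, that separates the finitely axiomatized case from full \pa\ and keeps the argument from colliding with G2.
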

\begin{proof}
If  $\{F(x)\}$ has a compact proof in \pa, then there is a finitely axiomatized fragment $W$ of \pa\ such that
\[ 
\pa\proves\forall x [s(x)\!:_{W}\! F^\bullet(x) ].\] 
Therefore, 
\[ \pa\proves\forall x \Box_W F(x). \]
(here $\Box_W$ is the standard provability predicate in $W$). 
By the well-known \pa-provable uniform reflection principle for finitely axiomatized fragments $W$ of \pa, 
\[ \pa\proves\forall x F(x).\]
\end{proof}
\begin{Coro} Compact proofs of consistency are ruled out by G2.
\end{Coro}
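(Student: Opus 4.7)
The plan is to deduce the corollary as a one-line consequence of Proposition~4 together with Gödel's second incompleteness theorem. The key observation is that consistency of \pa\ is already naturally presented as a schema in the sense of Section~4: setting $F(x) := \neg \mbox{Prf}_{\pa}(x,\bot)$, we have $\mbox{Con}(\pa) = \forall x F(x)$, and a selector proof of the schema $\{F(x)\}$ is exactly what one should call a selector proof of \pa-consistency, namely a primitive recursive procedure producing, on input $n$, a \pa-proof that $n$ is not a \pa-proof of $\bot$, together with a \pa-verification that this procedure works.

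Assuming such a selector proof is compact, Proposition~4 delivers $\pa\proves \forall x F(x)$, which is $\pa\proves \mbox{Con}(\pa)$, contradicting G2. This establishes the corollary for the ``native'' consistency schema. The same argument applies to the Mostowski-style schema $\{\mbox{Con}_{\pa_{\upharpoonright x}}\}$ from the previous section: a compact selector proof of it would, by Proposition~4, yield $\pa\proves \forall x\,\mbox{Con}_{\pa_{\upharpoonright x}}$, from which $\pa\proves \mbox{Con}(\pa)$ follows by the \pa-formalized compactness of \pa-derivations (any putative \pa-proof of $\bot$ lies in some $\pa_{\upharpoonright n}$), again contradicting G2.

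No substantive obstacle is anticipated; the content of the corollary lies entirely in Proposition~4, and the appeal to G2 is immediate. The only conceptual point worth stressing is that the argument requires consistency to be treated as a genuine schema $\{F(x)\}$ rather than as an isolated closed formula, so that Proposition~4 is applicable; this is built into the standard formalization $\mbox{Con}(\pa) = \forall x\,\neg\mbox{Prf}_{\pa}(x,\bot)$. It is precisely this interplay between compact selector provability of a schema and single-formula provability of its universal closure that pins down the obstruction and simultaneously indicates why the corollary leaves open the door for the non-compact proofs of \pa-consistency discussed elsewhere in the paper.
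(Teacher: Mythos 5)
Your argument is correct and is essentially the paper's own proof: identify consistency with the schema $\{\neg\lc{x}\bot\}$, apply Proposition~\ref{p4} to conclude that a compact selector proof would give $\pa\proves\forall x\,\neg\lc{x}\bot$, and contradict G2. The additional remark about the Mostowski-style schema is a harmless extra not present in the paper.
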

Indeed, the \pa-consistency is a contentual universal statement 
\[ \mbox{\it ``for all natural numbers $n$, $\neg\lc{n}(0\!=\!1)$"}.\]
According to definitions, a selector proof of consistency is a selector proof of the schema $\{\neg\lc{x}\bot\}$ which is covered by Proposition~\ref{p4}. Since, by G2, \pa\ does not prove 
$\forall x \neg\lc{x}(0=1)$, the schema $\{\neg\lc{x}\bot\}$ and the consistency property, do not have compact selector proofs in \pa. \begin{Coro} Non-compact proofs of consistency are not ruled out by G2. 
\end{Coro}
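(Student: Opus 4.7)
The plan is to argue that the standard route by which G2 could rule out selector proofs of consistency, namely Proposition~\ref{p4}, requires compactness in an essential way, and then to indicate that the explicit construction of \cite{Art20,Art25} provides a non-compact selector proof of Con(\pa) that escapes this route.

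First I would revisit the proof of Proposition~\ref{p4}. From a compact selector proof of $\{\neg\lc{x}\bot\}$ one obtains $\forall x\,\Box_W \neg\lc{x}\bot$ for some \emph{finitely axiomatizable} $W\subseteq\pa$ and then extracts $\forall x\,\neg\lc{x}\bot$ via the \pa-provable uniform reflection principle for $W$. Finite axiomatizability is indispensable: \pa\ does not prove its own uniform reflection schema, since otherwise it would already prove Con(\pa). Consequently, for a non-compact selector proof $\langle s, v\rangle$ of the consistency schema, the strongest single-formula consequence extractable in \pa\ is $\forall x\,\Box_\pa \neg\lc{x}\bot$, which by G2 is strictly weaker than $\forall x\,\neg\lc{x}\bot$. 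Thus no contradiction with G2 can be derived from $\langle s, v\rangle$, and G2 imposes no direct obstruction.

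To ensure the situation is not vacuous I would cite the non-compact selector proof of Con(\pa) from \cite{Art20,Art25}, which formalizes in \pa\ an explicit variant of the standard MRT construction. The selector $s(n)$ primitive-recursively determines a level $k(n)$ bounding the complexity of the \pa-axioms cited in the alleged proof coded by $n$ and produces a \pa-derivation of $\neg\lc{n}\bot$ using $\mbox{\it Tr}_{k(n)}$ and the instance of Lemma~\ref{l1} at level $k(n)$. The verifier $v$ checks this uniform recipe inside \pa. Non-compactness is then forced, since by Tarski's undefinability no single $\mbox{\it Tr}_k$ covers all $n$; indeed by Proposition~\ref{p4} compactness would yield a \pa-proof of Con(\pa), contradicting G2.

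The main obstacle will be justifying that a single \pa-proof $v$ using a fixed amount of induction really does certify $\forall x\,[s(x)\!:_{\pa}\!\neg\lc{x}\bot]$ even though the outputs of $s$ draw on an unboundedly growing sequence of induction instances. This is the technical heart of \cite{Art20,Art25}: what is formalized inside \pa\ is the finite meta-description of the recipe, not the ever-growing individual derivations it produces, which is precisely what makes the construction non-compact yet \pa-verifiable, and hence compatible with G2.
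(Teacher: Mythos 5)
Your proposal is correct and follows essentially the same route as the paper: the paper's entire justification for this corollary is the one-line observation that non-compact selector proofs of the consistency property exist, citing \cite{Art20,Art25}. Your additional analysis of why Proposition~\ref{p4} essentially needs the finitely axiomatizable fragment $W$ (uniform reflection fails for \pa\ itself) is a sound elaboration rather than a different argument.
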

There are (non-compact) selector proofs of consistency property, cf. \cite{Art20,Art25}. 
\mpn

All the above offer a well-principled answer to the question posed by Lev Beklemishev: what kinds of proofs are ruled out by G\"odel's Second Incompleteness Theorem? G2 prohibits compact proofs but does not rule out non-compact proofs of \pa-consistency.

\subsection{MRT and a proof of \pa-consistency in \pa}\label{s5}

 Now we answer a question by Mikhail Katz: whether MRT counts as a \pa-consistency proof of in \pa? 
 The standard proof of MRT establishes that
 \begin{equation}\label{mrt}
 \mbox{\it for all $n$, $s(n)$ is a \pa-proof of ${\sf Con}_{{\sf PA}_{\upharpoonright n}}$. }
 \end{equation}
 for the primitive recursive selector $s(x)$ described above. This can be regarded as a selector proof of the consistency property of the form 

 \begin{equation}\label{mrtc}
 \mbox{\it for all $n$, ${\sf PA}_{\upharpoonright n}$ is consistent. }
 \end{equation} 
 However, (\ref{mrt}) it is not yet a proof in \pa. To make it a selector proof of \pa-consistency in \pa, we need to prove the direct formalization of (\ref{mrt}) in \pa: 
 \begin{equation}\label{mrtf}
\pa\proves \forall x\ \lc{s(x)}{\sf Con}^\bullet_{{\sf PA}_{\upharpoonright x}}.
 \end{equation}
 This is how \pa-consistency was selector-proven in \pa\ in \cite{Art20,Art25}. 
 \mpn
 
 From (\ref{mrt}), we can also derive the ``official" formulation of MRT:
 \begin{equation}\label{omrt}
\mbox{\it for all $n$, there is a proof of ${\sf Con}_{{\sf PA}_{\upharpoonright n}}$ in \pa,}
 \end{equation}
 and its formalized version
 \begin{equation}\label{omrtf}
\pa\proves \forall x\ \Box{\sf Con}^\bullet_{{\sf PA}_{\upharpoonright x}}
 \end{equation}
 (here $\Box$ is the standard provability predicate in \pa), 
which does not count as a proof of \pa-consistency. For example, if (\ref{omrtf}) were a proof of \pa-consistency, it would yield a \pa-proof of individual 
${\sf Con}_{{\sf PA}_{\upharpoonright n}}$'s, but this conclusion relies on the implicit reflection
\[ \Box F \imp F \]
which is not provable in \pa.

\section{Discussion}

There is a robust intuition of compact and non-compact reasoning, Section~\ref{intro}. To make this intuition precise, we need some rigorous representation of arithmetical reasoning $R(n)$ leading, given $n$, to the conclusion $F(n)$ in a proof of (\ref{examples}). By the formalization principle, such $R(n)$ could be represented by a formal \pa-derivation of $F(n)$. A standard G\"odelian formalization leads directly to the selector proofs format
\[ \lc{R(n)}F(n).\] 
Then we project the idea of compact/non-compact proofs to the existing selector proofs framework, enabling us to speak about compact and non-compact selector proofs. However, the concept of compact/non-compact proofs is applicable in greater generality, e.g., when $R(n)$s are not assumed to be computable.

Due to Proposition~\ref{p4}, the provability of $\forall x F(x)$ is mathematically equivalent to having a compact selector proof of the schema $\{F(x)\}$. This enables us to answer to Beklemishev's question, ``What proofs of consistency are prohibited by G2?": compact selector proofs. 

Interestingly enough, the notion of selector proofs is not present in the question, but is dominant in the exact answer. The reason for this is intrinsic: conventional provability of  $\forall x F(x)$ is equivalent to existence of a compact selector proof of the schema $\{F(x)\}$.

Selector proofs of \pa-consistency in \pa\ \cite{Art20,Art25} complement this answer: a step from compact to non-compact proofs yields a formalizable in \pa\ proof of \pa-consistency.


Non-compact proofs constitute a class of mathematical proofs overlooked in foundational studies and not covered by G\"odel's Second Incompleteness theorem. 

\section{Thanks}\label{s6}

Many thanks to participants of inspiring recent  discussions on these issues: 
\medskip\par
Arnon Avron, Alexandru Baltag, Lev Beklemishev, Thierry Coquand, Nachum Dershowitz, Mel Fitting, Elijah Gadsby, Joel Hamkins, Richard Heck, Mikhail Katz, Vladimir Krupski, Menachem Magidor, Anil Nerode, Dmitry Nogin, Elena Nogina, Vincent Peluce, Andrei Rodin, Ilya Shapirovsky, Wilfried Sieg, Albert Visser, Noson Yanovsky, and many others.



\begin{thebibliography}{10}

\bibitem{AS92a}
S.~Artemov and T.~Strassen. 
\newblock {\it The Basic Logic of Proofs}. 
\newblock Tech. Rep. IAM 92-018, Department for computer science, University of Berne, Switzerland, September 1992.

\bibitem{AS92b}
S.~Artemov and T.~Strassen. 
\newblock {\it The Basic Logic of Proofs}. 
\newblock  In: B\"orger, E., J\"ager, G., Kleine B\"uning, H., Martini, S., Richter, M.M. (eds) {\em Computer Science Logic. CSL 1992.} Lecture Notes in Computer Science, vol 702, pp 14--28. Springer, Berlin, Heidelberg. 

\bibitem{Art01a}
S.~Artemov.
\newblock Explicit provability and constructive semantics.
\newblock {\em Bulletin of Symbolic Logic}, 7(1):1--36, 2001.

\bibitem{Art20}
S.~Artemov.
\newblock {\it The Provability of Consistency}. 
\newblock ArXiv preprint arXiv:1902.07404v5, 2020.

\bibitem{Art25}
S.~Artemov.
\newblock {\it Serial properties, selector proofs and the provability of consistency}. 
\newblock {\em Journal of Logic and Computation}, 35(3), exae034, 2025.

\bibitem{Buss98}
S.~Buss.
\newblock First-order proof theory of arithmetic.
\newblock {Pages  79--147 of:} Buss, S. (ed), {\em Handbook of Proof Theory}.
\newblock {Studies in Logic and the Foundations of Mathematics, vol. 137}, Elsevier, 1998. 

\bibitem{Fef60}
S.~Feferman.
\newblock Arithmetization of metamathematics in a general setting.
\newblock {\em Fundamenta Mathematicae}, 49(1):35--92, 1960.

\bibitem{HP17}
P.~H\`ajek and P.~Pudlak. 
\newblock {\it Metamathematics of First-Order Arithmetic}.
\newblock {Perspectives in Logic, vol. 3}, Cambridge University Press, 2017.  

\bibitem{Gad25}
E.~Gadsby.
\newblock {\it Properties of Selector Proofs}. 
\newblock ArXiv preprint arXiv:2509.19373, 2025.

\bibitem{God31}
K.~G\"odel. 
\newblock \"Uber formal unentscheidbare S\"atze der Principia Mathematica und verwandter Systeme I." \textit{Monatshefte f\"ur Mathematik und Physik} 38:173--98, 1931.
\newblock English translation: On formally undecidable propositions of Principia Mathematica and related systems. In {\em Collected Works}: Oxford University Press: New York. Editor-in-chief: Solomon Feferman. Volume I: Publications 1929--1936, 1986.  

\bibitem{God38}
K.~G{\"{o}}del.
\newblock Vortrag bei {Z}ilsel/{L}ecture at {Z}ilsel's (*1938a).
\newblock In Solomon Feferman, John~W. Dawson, Jr., Warren Goldfarb, Charles
  Parsons, and Robert~M. Solovay, editors, {\em Unpublished essays and
  lectures}, volume III of {\em {K}urt {G\"{o}}del Collected Works}, pages
  86--113. Oxford University Press, 1995.
  
 \bibitem{KHW55}
G.~Kreisel, and H.~Wang.  
\newblock Some applications of formalized consistency proofs. 
\newblock {\em Fundamenta mathematicae}, 42:101--110, 1955.

\bibitem{vOo99}
J.~van~Oosten.
\newblock {\it Introduction to Peano Arithmetic: G\"odel Incompleteness and Nonstandard Models}. 
\newblock Lecture Notes. Mathematical Institute, Universiteit Utrecht, 1999.

\bibitem{Pud98}
P.~Pudlak.
\newblock The Lengths of Proofs.
\newblock {Pages  547--638 of:} Buss, S. (ed), {\em Handbook of Proof Theory}.
\newblock {Studies in Logic and the Foundations of Mathematics vol. 137}, Elsevier, 1998. 

\bibitem{SSK23}
P. G.~Santos, W.~Sieg, and R.~Kahle.
\newblock A new perspective on completeness and finitist consistency. 
\newblock {\em Journal of Logic and Computation},  exad021, 2022.

\bibitem{Smo85}
C.~Smory\'{n}ski.
\newblock {\em Self-Reference and Modal Logic}.
\newblock Springer-Verlag New York, 1985.




\end{thebibliography}
\end{document}